\newcommand{\mcal}[1]{\mathcal{#1}}
\newcommand{\Ind}[1]{I\!\left\{ #1 \right\}}
\newtheorem{proposition}{Proposition}[section]
\newtheorem{lemma}{Lemma}[section]
\newcommand{\R}[1]{\label{#1}} 
\begin{document}

\title{Optimality of testing procedures for survival data in the non-proportional hazards setting\footnote{Manuscript accepted for publication on Biometrics on May 27, 2020.}}
\date{}

\author{Andrea Arfè\footnote{Email: \url{andrea_arfe@hms.harvard.edu}. Website: \url{andreaarfe.wordpress.com}.} $^{1,2}$, Brian Alexander$^2$, Lorenzo Trippa$^{2,3}$}
\date{\begin{small}
$^1$Harvard-MIT Center for Regulatory Science, Harvard Medical School, Boston, Massachusetts.
$^2$Department of Data Sciences, Dana-Farber Cancer Institute, Boston, Massachusetts.
$^3$Department of Biostatistics, Harvard T.H. Chan School of Public Health, Boston, Massachusetts.
\end{small}}



\maketitle

\begin{abstract}
Most statistical tests for treatment effects used in randomized clinical trials with survival outcomes are based on the proportional hazards assumption, which often fails in practice. Data from early exploratory studies may provide evidence of non-proportional hazards which can guide the choice of alternative tests in the design of practice-changing confirmatory trials. We developed a test to detect treatment effects in a late-stage trial which accounts for the deviations from proportional hazards suggested by early-stage data. Conditional on early-stage data, 
among all tests which control the frequentist Type I error rate at a fixed $\alpha$ level, 
our testing procedure maximizes the Bayesian predictive  probability 
that the study will demonstrate the efficacy of the experimental treatment. Hence, the proposed test provides a useful benchmark for other tests commonly used in the presence of non-proportional hazards, for example weighted log-rank tests. We illustrate this approach in simulations based on data from a published cancer immunotherapy phase III trial. 
\end{abstract}

\emph{Keywords}: censored data; decision theory; design of clinical trials; hypothesis testing; proportional hazards.

\section{Introduction}

Researchers often use data generated by early-phase clinical studies to specify the protocol of randomized confirmatory phase III trials. Data predictive of confirmatory trial outcomes, including early estimates of treatment effects, are used to choose the primary endpoints \citep{Gomez2014}, the sample size \citep{Lindley1997}, the target populations \citep{Lee2018}, and other aspects of the study design \citep{Brody2016}. Still, in most cases prior information is not used to specify in the protocol, as mandated by regulatory agencies, which hypothesis testing procedure will be used in the final analyses to provide evidence of treatment effects. Agencies such as the U.S. Food and Drug Administration require the control of Type I and II errors at pre-specified rates \citep{USFDA1998}. 

In Phase III trials, standard tests, such as Mantel's log-rank, are often selected even for studies where prior data suggest their assumptions will be violated \citep{Royston2013,Alexander2018}. For survival endpoints, methods related to the log-rank test are prevalent. Asymptotically, this is the most powerful test with a proportional hazards alternative \citep{Fleming2011}. However, the proportional hazards assumption is often violated in practice, contributing to false-negative findings \citep{Royston2013}, invalidating sample size calculations \citep{Barthel2006}, and affecting interim analyses \citep{Houwelingen2005}. 

Data from early-stage studies can inform about deviations from the proportional hazards assumption, suggesting the use of alternative methods  \citep{Royston2013}. Several extensions and alternatives are available to replace Mantel's test, such as weighted \citep{Fleming2011} or adaptive log-rank tests \citep{Yang2010}, and restricted mean survival tests \citep{Royston2013}. Some of these  procedures identify the most powerful test against specific alternatives, which may  be  discordant with early estimates from previous trials. Moreover, their optimality typically holds in a large-sample sense (e.g. in the local limit, for weighted log-rank tests;  \citealp{Fleming2011}). 

We develop a statistical test to detect treatment effects in late-stage trials, accounting for deviations from the proportional hazards assumption  indicated by early-phase studies (e.g. phase II trials). The proposed test does not belong to the weighted log-rank family or other common classes of tests. Starting from decision theory principles \citep{Berger2013}, we derive this test as the solution of a constrained decision problem \citep{Ventz2015}: conditional on early-stage data, the test maximizes the predicted finite-sample power among all tests which control the frequentist Type I error rate of the late-stage study at a fixed $\alpha$ level. More precisely, the test maximizes the Bayesian predictive probability that the null hypothesis will be correctly rejected at the end of the confirmatory trial. The test is therefore a useful benchmark for other procedures applicable in the presence of non-proportional hazards.

As a motivating example, we consider the analysis of a randomized trial with delayed treatment effects on survival outcomes. This  characteristic occurs when the treatment requires an induction period before it starts to provide therapeutic effects. When treatment effects are delayed, the hazard functions are not proportional and they separate across arms only later during  follow-up \citep{Fine2007}. Initially overlapping survival curves (c.f. Figure \ref{fig:sub1}) are well documented in trials of cancer immunotherapies \citep{Chen2013, Alexander2018}. They can also be observed in other settings, such as in studies of breast cancer \citep{Mehta2012} and melanoma \citep{Robert2015} chemotherapies.  

\section{Example}\label{sec:motivatingexamples}

We consider  the survival times of the 361 patients with head and neck carcinomas that participated in CheckMate 141 study \citep{Ferris2016}, a Phase III trial that randomized patients to receive nivolumab, a novel cancer immunotherapy, or standard of care (SOC) in a 2:1 ratio. We reconstructed the individual-level data of this trial from Figure 1a of \citet{Ferris2016} by means of the DigitizeIt (TM) software (version 2.2) and the data extraction method of \citet{Guyot2012}. Figure \ref{fig:sub1} shows the resulting Kaplan-Meier curves, which compare survival probabilities between the two study arms. These do not clearly separate in the initial 3-4 months of follow-up, a signal of delayed treatment effects. 

\begin{figure}[!h]
\captionsetup[subfigure]{justification=centering}
\centering
\caption{Panel a, reconstructed Kaplan-Meier curves from the CheckMate 141 trial and posterior estimates obtained from the piecewise exponential model (Section \ref{sec:pem}; dotted lines represent the position of the cut-points). Panel b, Monte Carlo estimates of the rejection probability of selected tests (Section \ref{sec:sims}). Legend: permutation, maximum-BEP test of Section \ref{sec:opttest} based on the piecewise exponential model (highlighted in red); adaptive, adaptive log-rank test of \citet{Yang2010}; mantel, classical Mantel's log-rank test; $G^{0,1}$, Fleming-Harrington weighted log-rank test; lagged, lagged-log rank that ignores the first $10\%$ of observed follow-up times \citep{Zucker1990}, RMST, test of the difference in restricted mean survival times \citep{Huang2018}.}
\begin{subfigure}[t]{0.45\linewidth}
\subcaption{}\label{fig:sub1}
\includegraphics[scale=0.4,trim={0cm 0cm 0cm 0cm}]{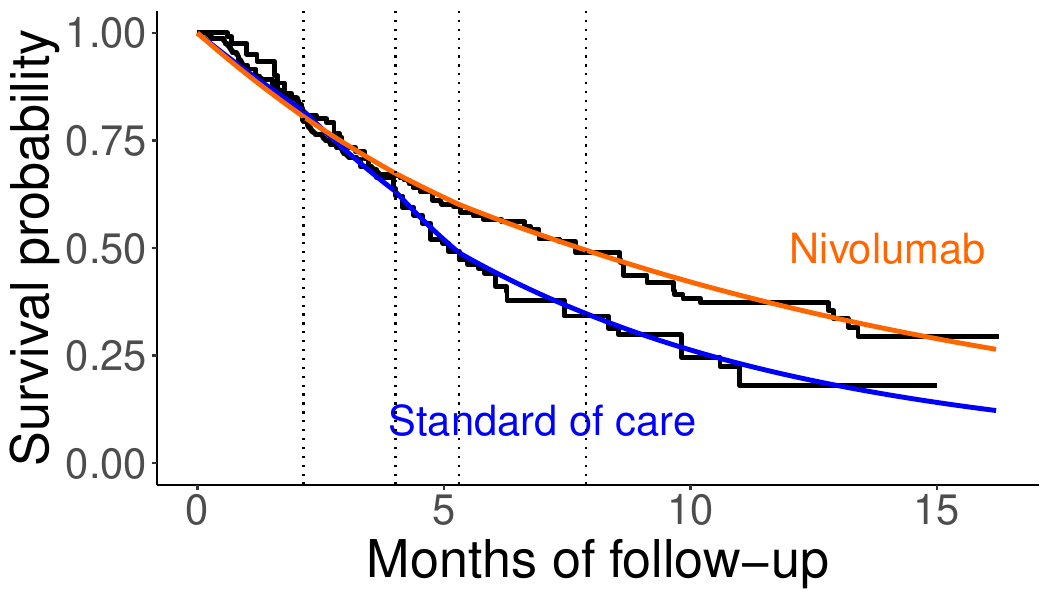}
\end{subfigure}
\hspace{1em}
\begin{subfigure}[t]{0.4\linewidth}
\centering
\subcaption{}\label{fig:sub2}
\includegraphics[scale=0.42,trim={0cm 0cm 0cm 0cm}]{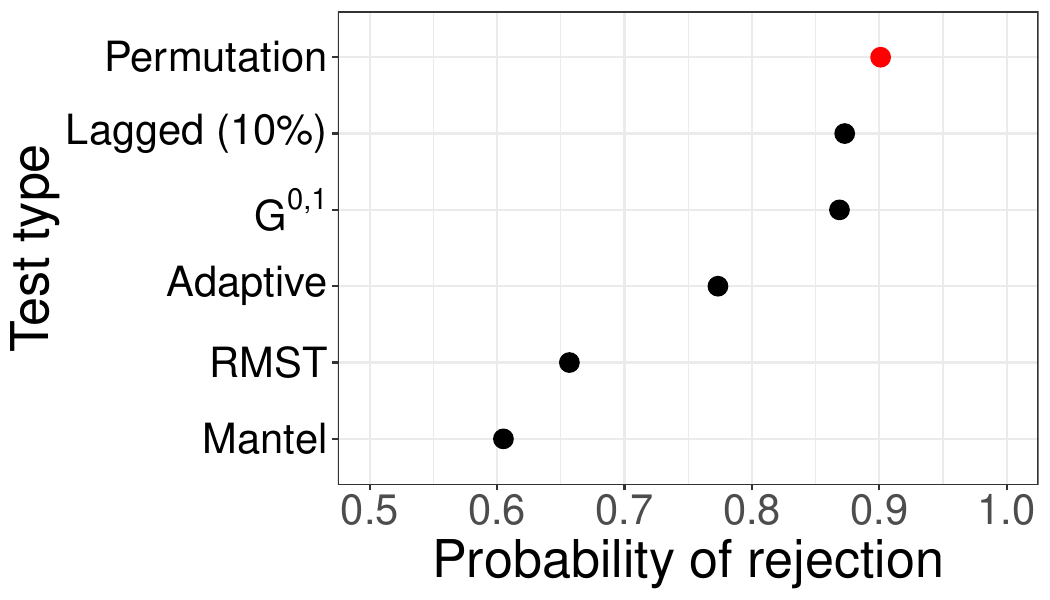}
\end{subfigure}
\label{fig:1}
\end{figure}

\section{Planning a late-stage trial}\label{sec:assumptions}

We plan a late-stage randomized trial with a survival endpoint and a sample size of $n$ patients. This will generate data $x=(t,d,a)$ to test  if the treatment has positive effects on the primary outcome. Here, $t=(t_1,\ldots,t_n)$ are the observed follow-up times, $d=(d_1,\ldots,d_n)$ are the corresponding censoring indicators ($d_i=1$ if $t_i$ is censored, while $d_i=0$ if an event was observed), and $a=(a_1,\ldots,a_n)$ are the treatment arm indicators ($a_i=0$ or $a_i=1$ if the $i$-th patient  is randomized to the control or treatment arm). Patients are assigned to arms with a fixed randomization probability. We assume that censoring times are \textit{non informative} in the sense of \citet{Heitjan1991} and independent of treatment assignment. 


For design purposes, we specify a model for the distribution that will generate the data $x$. 
This is described by a density $p_\theta(x)$ that depends on a vector of parameters $\theta\in\Theta$. Here $\theta$ may be infinite-dimensional if the model is semi- or non-parametric. Typically, $p_\theta(x)$ will have the form 
\begin{equation}\label{eqn:distr}
p_\theta(x) = \prod_{i=1}^n r^{a_i}(1-r)^{1-a_i} h_{a_i}(t_i;\theta)^{1-d_i} S_{a_i}(t_i;\theta) g_i(t_i)^{d_i}G_i(t_i)^{1-d_i},
\end{equation}
where $r\in(0,1)$ is the probability of assignment to arm $a=1$, $h_a(t;\theta)>0$ is the hazard function of arm $a=0,1$ (for example, in the exponential model, $h_a(t;\theta)=\theta_a$, $\theta=(\theta_0,\theta_1)\in \Theta=(0,+\infty)^2$), $S_a(t;\theta)=\exp\left(-\int_0^t h_a(s;\theta)ds\right)$ is the corresponding survival function, and $g_i(t)$ and $G_i(t)$ are the density and (left-continuous) survival function of the $i$-th patient's censoring time. Here, the censoring mechanism is taken as known, a common assumption when planning new experiments \citep{Chow2007}. We will later explain that this assumption is  not used in the development of the proposed testing procedure.  

We consider the non-parametric null hypothesis $H_0: P\in\mcal{P}_0$, where $P$ is the true data-generating distribution of $x$ (i.e. $P(A)$ is the probability that $x\in A$) and $\mcal{P}_0$ is the class of all distributions which are invariant with respect to permutations of the treatment arm assignments. Hence, $P\in\mcal{P}_0$ if $P(t,d,a)=P(t,d,a')$ for all $a'$ obtained by permuting the elements of $a$. 

The alternative hypothesis is defined from model (\ref{eqn:distr}) as $H_1$: $P$ has density $p_\theta(x)$ for some $\theta\in\Theta_1$, where $\Theta_1$ is a subset of $\Theta$. For example, $\Theta_1$ may include all $\theta$ such that $h_0(t;\theta)\neq h_1(t;\theta)$, such that the median of $S_1(t;\theta)$ is greater than that of $S_0(t;\theta)$, or such that the restricted mean survival time is greater in arm $a=1$ \citep{Royston2013}. 


According to the definition of the null hypothesis, 
when the experimental treatment has no effects -- regardless of whether the model $p_\theta(x)$ is correct or not -- the treatment assignments $a_1$, $\ldots$, $a_n$ do not provide information about the follow-up times $t$ and censoring indicators $d$. Hence, the distribution of the data does not change if these are arbitrarily permuted \citep{Fisher1935, Dawid1988, Good2006, Pesarin2010}. 

This definition of $H_0$ covers distributions $P(x)$ for which $(t_1,d_1,a_1),\ldots,(t_n,d_n,a_n)$ - the observations from individual patients - are not independent and identically distributed. This may happen when investigators selectively enroll patients based on interim analyses or results from other studies published during the enrollment period. Additional examples include amendments of inclusion-exclusion criteria, or improvements of adjuvant therapies \citep{Tamm2014}. In these cases, if the experimental treatment has no effects, $P(x)$   remains invariant when the indicators of the assigned arms are permuted. 

It is now necessary to choose which $\alpha$-level test $\varphi(x)$ should be used in the late-stage trial. 
A \emph{(randomized) test} of $H_0$ is a function $\varphi(x)\in[0,1]$. After the data $x$ have been collected, the 
hypothesis $H_0$ is rejected with probability $\varphi(x)$ \citep{Lehmann2006}. A test $\varphi(x)$ is \emph{non-randomized} if it can only attain the values 0 and 1. Only non-randomized tests are used in practice, but here we consider randomized tests because of their analytic advantages. The expected value $E_P[\varphi(x)]=\int_{\mcal{X}_n} \varphi(x) dP(x)$ is equal to the probability of rejecting $H_0$ with data generated from the distribution $P$. If $\alpha\in(0,1)$ and $E_P[\varphi(x)]\leq \alpha$ for all $P\in\mcal{P}_0$, then $\varphi(x)$ is said to have \emph{level} $\alpha$. 

\section{Bayesian expected power}

Different $\alpha$-level tests are usually compared with respect to their \emph{power functions} $\pi_\varphi(\theta)=\int \varphi(x)p_\theta(x)dx$. If $\varphi_1(x)$ and $\varphi_2(x)$ are two $\alpha$-level tests for $H_0$ versus the simple alternative $H_1: \theta=\theta_1$, for some fixed $\theta_1\in\Theta_1$, then $\varphi_1(x)$ is preferred to $\varphi_2(x)$ if $\pi_{\varphi_1}(\theta_1)\geq\pi_{\varphi_2}(\theta_1)$. Such comparisons are difficult for composite alternative hypotheses. In fact, uniformly most powerful $\alpha$-level tests, i.e. tests achieving the maximum power across all alternative models $\theta_1\in \Theta_1$, may not exist \citep{Lehmann2006}. 

To address this problem, some authors proposed to compare tests with respect to their \emph{average power}. Specifically, the average power of a test $\varphi(x)$ is $\int_{\Theta_1} \pi_\varphi(\theta)p(\theta)d\theta$, where $p(\theta)$ is a distribution weighting each value of $\theta\in\Theta$ based on pre-experimental information \citep{Spiegelhalter1986, OHagan2005}. With this metric, two tests are always comparable. Additionally, $\alpha$-level tests maximizing the average power always exist, although these may be randomized \citep{Chen2007}.

To allow data $x_e=(t_e,d_e,a_e)$ from an early-stage trial to inform comparisons between tests, we consider a data-dependent prior $p(\theta|x_e)$. \R{R1_support}We assume that the prior probability of the alternative hypothesis $H_1$ is positive, i.e. $P(H_1|x_e)=\int_{\Theta_1}p(\theta|x_e)d\theta>0$ (in general it may be $P(H_1|x_e)<1$ because the support of the prior may be larger than $\Theta_1$). 

Several approaches have been proposed to incorporate historical data in a prior distribution, e.g. \emph{power priors} \citep{Ibrahim2015}, \emph{meta-analytic priors} \citep{Schmid2016}, and \emph{commensurate priors} \citep{Hobbs2011}. \R{AE_prior}The effective sample size metric  of \citet{Morita2008} can guide the specification of $p(\theta|x_e)$ and help avoid overly informative choices. 

For simplicity, we define $p(\theta|x_e)$ as the posterior distribution $p(\theta|x_e)\propto L(\theta;x_e) p(\theta)$, where, letting $n_e$ be the early-stage trial sample size, 
\begin{equation}\label{eqn:likelihood}
L(\theta;x_e)=\prod_{i=1}^{n_e} h_{a_{e,i}}(t_{e,i};\theta)^{1-d_{e,i}} S_{a_{e,i}}(t_{e,i};\theta),
\end{equation}
while $p(\theta)$ is a  prior distribution on $\Theta$ whose choice depends on the specific application context. In this  definition of $p(\theta|x_e)$ we implicitly assume homogeneous treatment effects and survival distributions across the early-  and late-stage trials.


Extending the average power approach, the \emph{Bayesian expected power (BEP)} of $\varphi(x)$ is 
\begin{equation}\label{eqn:exppow}
BEP_\varphi=\int_{\Theta_1} \pi_\varphi(\theta)p(\theta|x_e)d\theta,
\end{equation}
a concept first introduced by \citet{Brown1987} and ``rediscovered'' by several authors \citep{Liu2018}. 
It is simple to observe that $BEP_\varphi=\textrm{Pr}(\varphi(x)\textrm{ rejects } H_0\textrm{ and }\theta\in\Theta_1| x_e)$, 
the probability, conditional on the early-stage data, that $\varphi(x)$ will correctly reject $H_0$ at the end of the late-stage trial. This is often called the \emph{probability of success} of the trial  \citep{Liu2018}.

From the point of view of decision theory \citep{Berger2013}, the BEP is the expected value of the utility function $u(\theta,\varphi,x)=\Ind{\theta\in\Theta_1}\varphi(x)$ (if $H_1$ holds, then the utility increases with the probability $\varphi(x)$ of rejecting $H_0$). Indeed,
\begin{equation} \label{bep}
 BEP_\varphi=\int \int u(\theta,\varphi,x)p_\theta(x)p(\theta|x_e) dx d\theta.
\end{equation} 
The problem of choosing which test to apply in the late-stage trial can thus be stated as a constrained maximization  problem \citep{Ventz2015}: among $\alpha$-level tests we optimize the BEP.


\section{Tests maximizing the expected power}\label{sec:opttest}

\R{R2_clarification} We construct an $\alpha$-level test with maximum Bayesian expected power for the null hypothesis $H_0: P\in\mcal{P}_0$, which includes all distributions invariant with respect to permutations of the treatment assignments $a_1$, $\ldots$, $a_n$. We show that the optimal test  can be obtained in the form of a \emph{permutation test}. 
Any permutation test is obtained by computing or approximating  the distribution of some real-valued \emph{test statistic} $T(x)$ across all permutations of the treatment assignments, while the values of the follow-up times $t$ and censoring indicators $d$ are kept fixed at the observed values. 


To be more formal, for each permutation $\sigma$ of $(1,\ldots,n)$, we denote with $a_\sigma=(a_{\sigma(1)}$, $\ldots$, $a_{\sigma(n)})$ the corresponding vector obtained by re-ordering the elements of $a=(a_1,\ldots,a_n)$. Moreover, if $T(x)$ is any real-valued statistics, for each $x=(t,d,a)$ we let $T^{(1)}(x)\leq \cdots \leq T^{(n!)}(x)$ be the ordered values of $T(t,d,a_\sigma)$ as $\sigma$ varies across all $n!$ permutations. 

\R{rando} The $\alpha$-level (randomized) permutation test $\varphi(x)$ of $H_0$ based on the test statistic $T(x)$ can now be defined as follows. First, let $k_\alpha=n!-\lfloor\alpha n!\rfloor$, so that, for each $x$, $T^{(k_\alpha)}(x)$ is the $(1-\alpha)$-level quantile of $T^{(j)}(x)$ for $j=1,\ldots,n!$. Second, let $M^+(x)=\sum_{j=1}^{n!}I\{T^{(j)}(x)> T^{(k_\alpha)}(x)\}$ and $M^0(x)=\sum_{j=1}^{n!}I\{T^{(j)}(x)= T^{(k_\alpha)}(x)\}$ be the number of $T^{(j)}(x)$'s greater or equal to $T^{(k_\alpha)}(x)$, respectively. Then, the permutation test $\varphi(x)$ is defined by letting $\varphi(x)=1$ when $T(x)>T^{(k_\alpha)}(x)$, $\varphi(x)=0$ when $T(x)<T^{(k_\alpha)}(x)$, and $\varphi(x)=(\alpha n! - M^+(x))/M^0(x)<1$ when $T(x)=T^{(k_\alpha)}(x)$. This satisfies $E_P[\varphi(x)]=\alpha$ for all $P\in\mcal{P}_0$ \citep[Theorem 15.2.1]{Lehmann2006}. 

In order to construct a maximum-BEP test, we define
\begin{equation}\label{eqn:marginal}
m(x) = \int_{\Theta_1} L(\theta;x)p(\theta|x_e)d\theta,
\end{equation}
which is the \emph{marginal likelihood} of the data $x$ given the early-stage data $x_e$. Without loss of generality, we assume that the density $p_\theta(x)\propto L(\theta;x)$ defined by Equation \ref{eqn:distr} is determined by a dominating measure invariant with respect to permutations of the treatment assignments. 

\begin{proposition}\label{thm:thm1} Given the early-stage data $x_e$, the $\alpha$-level permutation test with test statistic $T(x)=m(x)$ maximizes the BEP in Equation (\ref{bep}) among all $\alpha$-level tests of $H_0$.
\end{proposition}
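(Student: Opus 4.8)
The plan is to reduce the claim to the two preceding propositions by showing that the permutation test built from the marginal likelihood $m(x)$ is \emph{literally the same test} as the one built from the predictive density $q(x)$, which Proposition \ref{lem:lemma1} already certifies as optimal. First I would invoke Proposition \ref{lem:lemma2} to replace the BEP criterion by the power $E_Q[\varphi(x)]$ against the simple alternative $P=Q$: since $BEP_\varphi=E_Q[\varphi(x)]\cdot P(H_1|x_e)$ with $P(H_1|x_e)>0$ a constant not depending on $\varphi$, a test maximizes the BEP over all $\alpha$-level tests if and only if it maximizes $E_Q[\varphi(x)]$. Then Proposition \ref{lem:lemma1}, applied with $T(x)=q(x)$ as in Equation \ref{eqn:preddist} (using that $\mu$ is permutation-invariant and that $Q\not\in\mcal{P}_0$, since the alternatives indexed by $\Theta_1$ are genuinely non-exchangeable), identifies the $\alpha$-level permutation test based on $q(x)$ as a BEP-maximizer.

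The crux is therefore the third step: showing that replacing the statistic $q(x)$ by $m(x)$ leaves the permutation test unchanged. I would start from the factorization implicit in Equation \ref{eqn:distr}, writing $p_\theta(x)=L(\theta;x)\,C(x)$, where $L(\theta;x)$ is the likelihood \ref{eqn:likelihood} and $C(x)=\prod_{i=1}^n r^{a_i}(1-r)^{1-a_i} g_i(t_i)^{d_i}G_i(t_i)^{1-d_i}$ collects the randomization and censoring factors, none of which depend on $\theta$. Substituting into \ref{eqn:preddist} and pulling $C(x)$ out of the integral gives $q(x)=C(x)\,m(x)/P(H_1|x_e)$, so $q$ and $m$ differ only by the factor $\lambda(x)=C(x)/P(H_1|x_e)$.

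The key observation is that $\lambda(x)$ is \emph{constant across permutations of the treatment assignment}: for fixed $(t,d)$ the censoring factor $\prod_i g_i(t_i)^{d_i}G_i(t_i)^{1-d_i}$ does not involve $a$ at all, while the randomization factor equals $r^{\sum_i a_i}(1-r)^{n-\sum_i a_i}$, which depends on $a$ only through $\sum_i a_i$, a quantity preserved by every permutation $\sigma$. Hence $\lambda(t,d,a_\sigma)=\lambda(t,d,a)=:\lambda>0$ for all $\sigma$, and the ordered permutation values satisfy $q^{(j)}(x)=\lambda\,m^{(j)}(x)$. Because multiplying every permuted statistic by the same positive constant preserves all the strict and weak comparisons that define the test, namely the threshold $T^{(k_\alpha)}(x)$, the counts $M^+(x)$ and $M^0(x)$, and the tie-breaking weight $(\alpha n! - M^+(x))/M^0(x)$, the permutation test based on $m(x)$ coincides pointwise with the one based on $q(x)$. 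Combining this identity with the first two steps yields the conclusion.

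I expect the main obstacle to be precisely this last verification: that the permutation test is invariant under rescaling the statistic by a factor that may depend on $(t,d)$ but not on $\sigma$. This cannot be dispatched by a generic monotonicity remark, since the rescaling factor varies with the data; it requires checking each clause of the test's definition (including the randomized tie-breaking), and it relies essentially on the fact that the nuisance factor $C(x)$ does not mix information across the permuted assignments.
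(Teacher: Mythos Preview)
Your proposal is correct and follows essentially the same route as the paper: invoke Proposition~\ref{lem:lemma2} to reduce BEP-maximization to maximizing $E_Q[\varphi(x)]$, apply Proposition~\ref{lem:lemma1} to the statistic $q(x)$, and then argue that the permutation tests based on $q(x)$ and $m(x)$ coincide because $q(x)/m(x)$ is invariant under permutations of $a$. The paper's own proof is terser---it simply notes the permutation-invariance of the ratio and that $q(t,d,a_\sigma)\propto m(t,d,a_\sigma)$---whereas you spell out the factorization $q(x)=C(x)m(x)/P(H_1\mid x_e)$ and check the clauses of the test definition explicitly, but the substance is identical.
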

\begin{proof}
The proof is provided in Appendix \ref{app:1}.
\end{proof}

The maximum-BEP test $\varphi(x)$ requires to compute the marginal likelihood $m(x)$ of $x$ and of all permuted versions of the same dataset. Since $m(x)$ does not depend on the censoring distribution functions  $G_i(t)$ that appear in Equation \ref{eqn:distr}, the censoring mechanism is irrelevant to identify the optimal test. Note, however, that the censoring mechanism still determines the value of the BEP of $\varphi(x)$.

\R{nonrando}Since randomized tests are not used in practice, we consider the non-randomized version of the test $\varphi(x)$ from Theorem \ref{thm:thm1}, i.e. $\varphi'(x)=\Ind{m(x)>m^{(k_\alpha)}(x)}$ (this does not depend on $M^+(x)$ or $M^0(x))$. The non-randomized test $\varphi'(x)$ rejects $H_0$ when the \emph{permutation p-value} $\textrm{ppv}(x) =\sum_\pi \Ind{m(t,d,a_\pi) \geq m(t,d,a)}/n!$ is less or equal than $\alpha$, where the sum extends over all $n!$ permutations of $a_1,\ldots,a_n$  \citep[Section 15.2.1]{Lehmann2006}. Typically, $n!$ will be too large to compute $\textrm{ppv}(x)$ exactly. Hence, we implement the test $\varphi'(x)$ by means of a Monte Carlo approximation. Accordingly, given $x$, a large random sample of permutations $\pi_1,\ldots,\pi_B$  ($B=10^3$, say) is used to compute the estimate $\widehat{\textrm{ppv}}(x)=\sum_{i=1}^B\Ind{m(t,d,a_{\pi_i}) \geq m(t,d,a)}/B.$ The hypothesis $H_0$ is then rejected if $\widehat{\textrm{ppv}}(x)\leq \alpha$ \citep[Section 1.9.3]{Pesarin2010}. 

\R{CMC}Since $\varphi'(x)\leq \varphi(x)$, the non-randomized test $\varphi'(x)$ is $\alpha$-level for $H_0$, but it may not achieve the maximum BEP. Still, $\varphi'(x)$ is a useful benchmark for other tests of $H_0$, as its BEP will be close to that of the maximum-BEP test $\varphi(x)$ for large $n$. In fact, the following Proposition shows that, under mild conditions, the difference between $BEP_\varphi$ and $BEP_{\varphi'}$ is bounded above by a known function $f(\alpha,r,n)$ such that $f(\alpha,r,n)\rightarrow 0$ as $n\rightarrow +\infty$ for all fixed levels $\alpha$ and randomization probabilities $r$. 
\begin{proposition}\label{app:prop} Suppose that for all $x=(t,d,a)$ with $q(x)>0$ it is $m(x)\neq m(t,d,a_\pi)$ for all permutations $\pi$ such that $a\neq a_\pi$. Then 
$$0\leq BEP_\varphi-BEP_{\varphi'}\leq f(\alpha,r,n)=\frac{(1-r)^n}{\alpha}\sum_{s=0}^n\left(\frac{r}{1-r}\right)^s.$$
\end{proposition}
\begin{proof}
The proof is provided in Appendix \ref{app:2}.
\end{proof}

In general, $f(\alpha,r,n)$ will be fairly small. For example, if $\alpha=0.05$ and $r=1/2$, $f(\alpha,r,n)=2^{-n}(n+1)/\alpha<10^{-3}$ for all $n\geq 15$. For $r\neq 1/2$ it is $f(\alpha,r,n)=[(1-r)^{n+1}-r^{n+1}]/\alpha(1-2r)$; for $r=2/3$ (as in Section \ref{sec:motivatingexamples}), $f(\alpha,r,n)<10^{-3}$ for all $n\geq 25$. 

\section{The piecewise exponential model}\label{sec:pem}

Because of its flexibility and tractability, to implement our maximum-BEP test we use a \textit{piecewise exponential model} \citep{Benichou1990}. In this model, the hazard function $h_a(t;\theta)$ is constant over the intervals of a fixed partition $\tau_0=0<\tau_1<\cdots<\tau_k<+\infty=\tau_{k+1}$ of the time axis. In particular, $h_a(t;\theta)=\theta_{a,j}$ if $t\in[\tau_{j-1},\tau_{j})$, with $j=1,\ldots,k+1$, $t\in\mathbb{R}_+$,  arms $a=0,1$, and $\theta=(\theta_{0,1},\ldots,\theta_{0,k+1},\theta_{1,1},\ldots,\theta_{1,k+1})\in\Theta=(0,+\infty)^{2(k+1)}$. 

The likelihood function $L(\theta;x)$ of the piecewise exponential model depends on a simple set of sufficient statistics. Given data $x=(t,d,a)$, let $s_{a,j}=\sum_{i=1}^n \max(0,\min(\tau_{j}-\tau_{j-1},t_i-\tau_{j-1})) I\{a_i=a\}$  be the total time at risk spent in the interval $[\tau_j,\tau_{j+1})$ by patients in arm $a$. 
Additionally, let $y_{a,j}=\sum_{i=1}^n (1-d_i) I\{a_i=a,\ \tau_{j-1}\leq t_i < \tau_{j}\}$ be the number of events observed during $[\tau_{j-1},\tau_{j})$ in arm $a$. Then,  
$L(\theta;x)=\prod_{a=0}^1\prod_{j=1}^{k+1}\theta_{a,j}^{y_{a,j}}\exp(-\theta_{a,j}s_{a,j}).$

For convenience, we use a conjugate prior $p(\theta)$. This is obtained by letting all $\theta_{a,j}$ be independent and distributed as a gamma random variable with shape parameter $u_{a,j}$ and rate parameter $v_{a,j}$. With this choice, the distribution $p(\theta|x_e)$  presents independent  $\theta_{a,j}$ components which are gamma  distributed  with shape parameter $u_{a,j}+ y_{e,a,j}$ and rate parameter $v_{a,j}+ s_{e,a,j}$, where the $y_{e,a,j}$ and $s_{e,a,j}$ are the sufficient statistics of $x_e$. The marginal likelihood $m(x)$ needed to implement the maximum-BEP test can thus be obtained explicitly from Equation \ref{eqn:marginal}: 
\begin{equation}\label{eqn:pemmarlik}
m(x)=\prod_{a=0}^1\prod_{j=1}^{k+1}\left(\frac{v_{a,j}+ s_{e,a,j}}{v_{a,j}+ s_{e,a,j}+ s_{a,j}}\right)^{u_{a,j}+ y_{e,a,j}+y_{a,j}}\frac{\Gamma(u_{a,j}+ y_{e,a,j}+ y_{a,j})}{\Gamma(u_{a,j}+ y_{e,a,j})},
\end{equation}
where $\Gamma(z)$ is the gamma function.

As an example, Figure \ref{fig:sub1} shows the posterior means of the survival probabilities in the nivolumab or SOC arm of CheckMate 141 obtained from the piecewise exponential model. For all $j=1,\ldots, k=4$, we conveniently defined $\tau_j$ to be the $j$-th quintile of the distribution of follow-up times in the SOC arm \R{R1_cuts}($\tau_1=2.2$, $\tau_2=4.0$, $\tau_3=5.3$, and $\tau_4=7.9$ months). In other words, the prior model is chosen by peeking at the early stage trial.
 Additionally, we specify gamma priors on the $\theta_{a,j}$ with $u_{a,j}=v_{a,j}=10^{-3}$ for all $a$ and $j$. The posterior estimates (Figure \ref{fig:sub1}) reflect the delayed separation in the Kaplan-Meier curves, as the estimated survival probabilities diverge only after  4 months of follow-up.   

\section{Application: trials with delayed treatment effects}\label{sec:app}

\subsection{Simulation study}
\label{sec:sims}

As an illustration, we use CheckMate 141 data to simulate a large number of phase II and III trials with delayed treatment effects. In these simulations, we compare different tests with respect to their probability of rejecting the hypothesis of no treatment effects at the end of the phase III trial. We consider Mantel's log-rank test and several others that account for delayed treatment effects: i) a lagged log-rank test that ignores the first $10\%$ of observed follow-up times \citep{Zucker1990}; ii) the Fleming-Harrington $G^{0,1}$ test, which gives more weight to late events \citep{Fine2007}; iii) the adaptive log-rank of \citet{Yang2010}, which weighs events according to a preliminary estimate of the hazard functions; and iv) a test of the difference in Restricted Mean Survival Times (RMSTs) across study arms \R{R1_rmst} (we follow \citealp{Huang2018}, and compute RMSTs up to the smallest of the two maximum event times observed in each study arm). We also implement the maximum-BEP test (using the Monte Carlo approach of Section \ref{sec:opttest}) based on phase II data. For all tests, we consider $\alpha=0.05$ and a two-sided alternative hypothesis.

To simulate a trial of size $n$, we first sample with replacement $n$ patients from the CheckMate 141 data. Then, depending on 
the individual treatment assignment, we generate the corresponding survival times from the Kaplan-Meier curves of Figure \ref{fig:sub1}. Assuming a maximum follow-up of 15 months, we generate patient's censoring times by sampling independently from the empirical censoring distribution \citep{Efron1981}.

Using this approach, we iterate the following steps 10,000 times: i) we simulate a phase II trial of about half the size of CheckMate 141 ($n_e=180$); \R{R1_mlik}ii) using the simulated phase II data $x_e$, we compute the sufficient statistics of the piecewise exponential model (i.e. all $s_{e,a,j}$ and $y_{e,a,j}$;
 c.f. Section \ref{sec:pem}); we fix the $\tau_j$s at the quintiles of the follow-up times in the SOC arm from $x_e$; iii) we simulate a subsequent phase III study, generating phase III data $x$ of size $n=361$; iv) we compute the marginal likelihood $m(x)$ of the phase III data $x$ (c.f. Equation \ref{eqn:pemmarlik}; we used gamma parameters $u_{a,j}=v_{a,j}=10^{-3}$); v) we apply our permutation test, as well as the others described above. The proportion of rejections across  iterations is the Monte Carlo estimate of a test's rejection probability.

Figure \ref{fig:sub2} reports the estimated rejection probabilities for each  testing procedure. The maximum-BEP permutation test based on phase II data had the highest estimated  probability of rejecting the null hypothesis (about 0.90). The $G^{0,1}$ test and the lagged log-rank test  have both  estimated rejection probabilities of about 0.87. The adaptive log-rank and RMST tests have lower rejection probabilities, 0.77 and 0.66 respectively. Mantel's log-rank test has the lowest estimated rejection probability, i.e. 0.60, a third less than the  one achieved by our test. This finding is consistent with previous studies, which highlighted how the log-rank test may suffer a severe loss of power when treatment effects are delayed \citep{Fine2007, Chen2013, Alexander2018}.


\subsection{Robustness analysis}
\label{sec:robust}

We consider four additional simulation scenarios in which the outcome distributions in phase II and III are not identical. In all scenarios, the distribution of the phase II data $x_e$ are the same as in Section \ref{sec:sims}, while the distribution of the phase III data $x$ are different. In Scenario 1, the data $x$ are generated from the predictive distribution $q(x)$ (see Equation \ref{eqn:preddist}): a value $\theta'$ is first sampled from $p(\theta|x_e)$, then $x$ is generated from the distribution $p_{\theta'}(x)$. Here we assume $r=2/3$ as in CheckMate 141 and that censoring can only occur after 15 months of follow-up. Proposition \ref{thm:thm1} indicates that, in this scenario, our permutation test has the highest expected power. Scenarios 2-4 instead represent two settings in which our test may suffer from a loss of power. In Scenario 2, $x$ is generated by a different piecewise exponential model than the one used to construct the benchmark test. The phase III delay in treatment effects is shorter than in phase II data. Specifically, $x$ is generated by a model with only one cut-point, fixed at $\tau_1=2$ months, and whose parameters are set equal to the maximum likelihood estimates obtained from CheckMate 141 data. Scenario 3 is similar, but the cut-point is fixed at $\tau_1=8$ months to represent longer phase III delays than in phase II. \R{R1.2.b.1}Finally, in Scenario 4 there are no delays in treatment effects (i.e. no cut-points) in phase III and the standard proportional hazards assumption holds. Supplementary Figure \ref{R2:KM_for_sims} shows the phase III survival curves used in each scenario.

\begin{figure}[!h]
\centering
\caption{Monte Carlo rejection probability estimates obtained in the robustness analysis of Section \ref{sec:robust}. Estimates are based on $10^4$ simulated trials. Scenario 1, best-case scenario in which our test has maximum the power among those of the same level. Scenario 2, delays shorter in phase III than phase II. Scenario 3, delays longer in phase III than phase II. Scenario 4, no delays in phase III. Supplementary Figure \ref{R2:KM_for_sims} shows the phase III survival curves assumed for each scenario. Legend: permutation, maximum-BEP test of Section \ref{sec:opttest} based on the piecewise exponential model (highlighted in red); adaptive, adaptive log-rank test of \citet{Yang2010}; mantel, classical Mantel's log-rank test; $G^{0,1}$, Fleming-Harrington weighted log-rank test; lagged, lagged-log rank that ignores the first $10\%$ of observed follow-up times \citep{Zucker1990}, RMST, test of the difference in restricted mean survival times \citep{Huang2018}.}
\includegraphics[scale=0.5,trim={0cm 0cm 0cm 0cm}]{./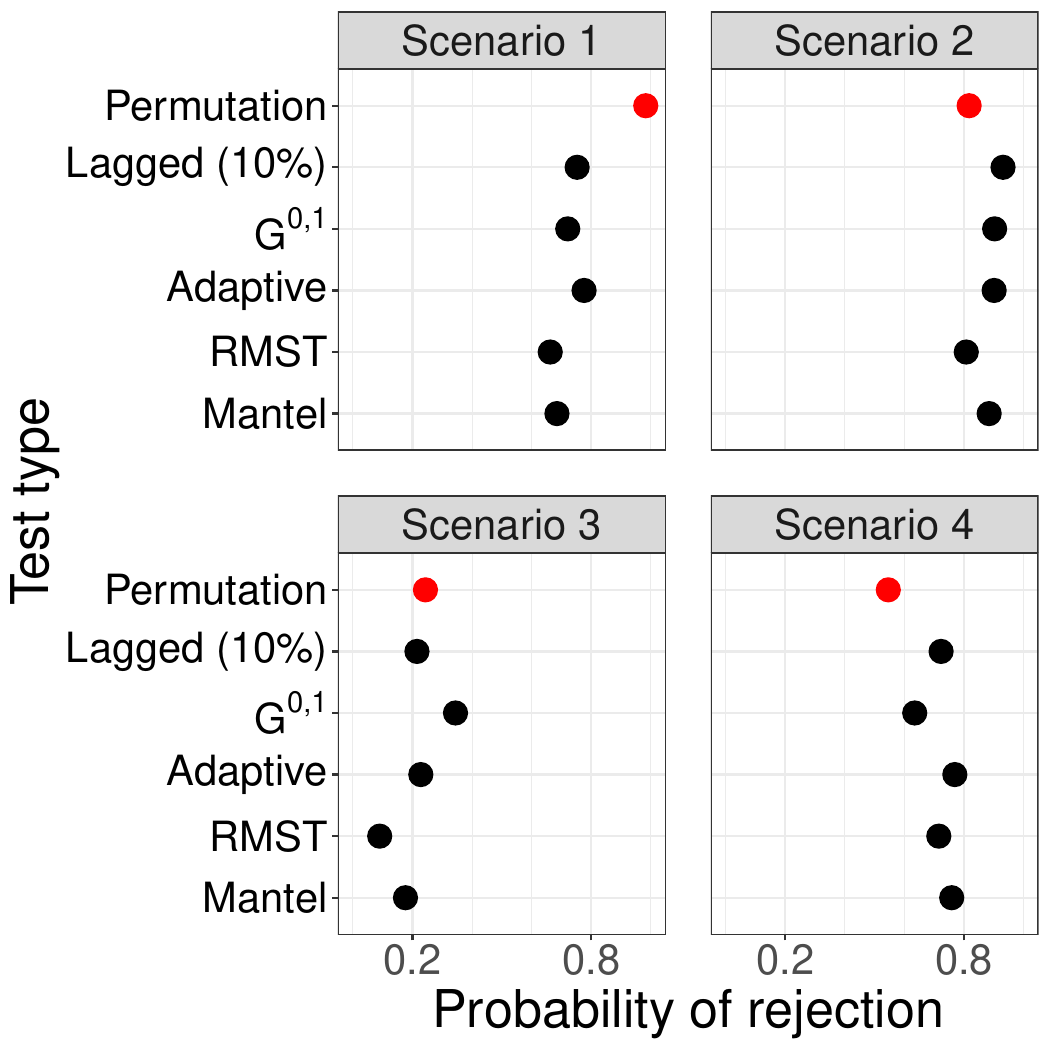}
\label{fig:new}
\end{figure}

Figure \ref{fig:new} shows the results of this robustness analysis. As expected, in Scenario 1 our permutation test has a much higher rejection probability than all other tests (0.98). Instead, its performance is sub-optimal in Scenarios 2-4. Although in Scenario 3 our permutation test may be considered comparable with the others (rejection probability equal to 0.29), in Scenario 2 it has the lowest rejection probability (0.84, compared to 0.90 for the Mantel's log-rank). \R{R1.2.b.2}The results from Scenario 4 highlight, as expected, that our test based on phase II data can have a lower power than Mantel's log-rank if the arm-specific hazards are proportional in the phase III study. All these findings support the intuition that the power of the maximum-BEP test depends on how accurately the phase III survival distributions can be predicted from prior  phase II data. If the phase II and III trial populations are markedly different, then a test specified using phase II data may perform  poorly in the phase III study. 

In the Supporting Information, we report on additional robustness analyses. \R{R1.2.c}First, we considered an additional simulation scenario where the survival curves cross (Supplementary Figure \ref{fig:cross}, panel a). In this scenario, the power of our permutation test was one of the highest among the compared tests (Supplementary Figure \ref{fig:cross}, panel b). 
 \R{R1.2.a.1}In addition, we assessed how the power of the proposed test varied with the size of the early-phase trial. To do so, we repeated the simulations of Section \ref{sec:sims} for different values of $n_e$ between 40 to 220 patients. \R{R1.2.a.2}The test's power was $\geq 80\%$ for all considered values of $n_e$, and $\geq 90\%$ for $n_e\geq 160$ (Supplementary Figure \ref{R1:phase2size}). \R{AE.3}

\section{Generalization to stratified designs}
\label{sec:markers}

Treatment effects are often expected to vary across patients' groups defined, for example, by gender or biomarkers. In such cases one can stratify patients with respect  to  covariates measured before  randomization. We focus on the primary goal  of  testing  whether the experimental  treatment has no effects across all strata or if it is effective at least in some of the strata (alternative hypothesis), for example in one or multiple  subgroups defined by a relevant biomarker \citep{Freidlin2010}. 

Our approach can be easily generalized to this setting. For simplicity, we suppose each patient $i=1,\ldots,n$ is  categorized  by  a binary covariate $z_i=0,1$, presence ($z_i=1$) or absence ($z_i=0$) of some marker. Data $x$ become $x=(t,d,a,z)$, where $z=(z_1,\ldots,z_n)\in\{0,1\}^n$. We assume that censoring is non-informative and independent of treatment assignments conditionally on $z_1, \ldots, z_n$ \citep{Heitjan1991}. 

To illustrate, we specify a piecewise-exponential model $h_a(t;\theta,z)$ for the hazard function in arm $a=0,1$ for patients with marker level $z=0,1$ \citep{Freidlin2010}: $h_a(t;\theta,z)=\theta_{a,z,j}$ for all $t\in [\tau_{j-1},\tau_j)$. The prior remains nearly identical to the  previous  sections. In particular, the marginal likelihood $m(t,d,a,z)$, similar to Equation \ref{eqn:marginal}, has a closed form expression. 

We specify the null hypothesis $H_0': P\in\mcal{P}'_0$, where $\mcal{P}'_0$ is the class of all distributions which are invariant with respect to permutations of the treatment assignment $a$ within strata of $z$. More precisely, $P\in\mcal{P}'_0$ if and only if $p$, the density of $P$, satisfies $p(t,d,a,z)=p(t,d,a_\sigma,z)$  for all permutations $\sigma$ of $(1,\ldots,n)$ such that $z_{\sigma(i)}=z_i$ for all $i=1,\ldots,n$.

With a simple modification, Proposition \ref{thm:thm1} still holds with this new definition of the null hypothesis. In Section \ref{sec:opttest}, the maximum-BEP permutation test computes  the distribution of $m(x)$ under $H_0$ by considering all permutations of the treatment arm indicators $a_1,\ldots, a_n$. 
In the stratified case, only permutations $\sigma$ of $(1,\ldots,n)$ such that $z_{\sigma(i)}=z_i$ for all $i=1,\ldots,n$ are considered. If $\Sigma(z)$ is the set of all such $\sigma$, then the permutation p-value associated to the maximum-BEP test is given by $\textrm{ppv}(x)=\sum_{\sigma\in \Sigma(z)}I\{m(t,d,a_\sigma,z)\geq m(t,d,a,z)\}/|\Sigma(z)|$, where $|\Sigma(z)|=(\sum_{i=1}^n z_i)!(n-\sum_{i=1}^n z_i)!$. 

\subsection{Simulation study of biomarker-stratified designs}\label{sec:moresims}

Similarly as in Section \ref{sec:sims}, we use CheckMate 141 data to simulate phase II and III trials where 50\% of patients express ($z_i=1$) a marker predictive of treatment effects \citep{Patel2015}. In the simulations, we compare different tests for the final analysis of the phase III trial, each accounting for patients' maker values. Specifically, we consider four (5\%-level) tests of $H_0'$:
i) a test based on a stratified Cox proportional hazards model (a common approach in this setting; \citealp{Mehrotra2012}); 
ii) the procedure obtained by first performing separate log-rank tests within marker strata and then applying the Bonferroni correction (another common approach; \citealp{Freidlin2014});
 iii) the test obtained by estimating separate log-normal accelerated failure time (AFT) models \citep[Chapter 2]{Kalbfleisch2002} within marker strata,
  testing the statistical significance of their regression coefficients, and then applying the Bonferroni correction; and, lastly, iv) our maximum-BEP test, tailored to the simulated phase II data.

In detail, we simulate 10,000 phase II ($n_e=180$) and III ($n=361$) trials from CheckMate 141 data for each of 3 scenarios. In Scenario 1, the survival distributions in phase II and III trials are the same. Here, the survival time of a patient in arm $a=0$ or with marker $z=0$ (respectively, in arm $a=1$ with marker $z=1$) is generated from the SOC (nivolumab) Kaplan-Meier curve in Figure \ref{fig:sub1}. Censoring times are generated as in Section \ref{sec:sims}.

In Scenarios 2 and 3, phase II data are generated as before, but the outcome distribution in phase III is different. In Scenario 2, phase III data are generated assuming proportional hazards within levels of $z$. Specifically, the survival time of a patient in arm $a=0$ or with marker $z=0$ (respectively, in arm $a=1$ with $z=1$) is simulated from an exponential distribution, fixed at the corresponding estimate from the SOC (respectively, nivolumab) arm of CheckMate 141. Scenario 3 is similar, but we do not assume proportional hazards in the strata of $z$. Rather, phase III data are generated as in Scenario 2, but using stratum-specific log-normal distributions \citep{Kalbfleisch2002}.

Figure \ref{fig:2} shows the results of the simulations. For Scenario 1, the estimated rejection probabilities are 0.18 for the stratified Cox model, 0.26 for the Bonferroni-based log-rank test, 0.08 for the Bonferroni-based AFT test, and 0.49 for our test tailored to early-stage data. As in Section \ref{sec:robust}, in Scenario 2 the test based on a stratified Cox proportional hazards model and the Bonferroni-based log-rank test outperforms our tailored test, as their estimated rejection probabilities are 0.21, 0.29, and 0.17, respectively. Instead, in Scenario 3, the proposed test has an estimated rejection probability (0.32) higher than that of the other tests, although the power advantage is reduced compared to Scenario 1. \R{R1M7}We obtained qualitatively the same results when we repeated the simulations assuming a phase III sample size ($n=1,000$) sufficient to achieve 90\% power with our test (Supplementary Figure \ref{fig:suppl_marker}). These results confirm that, in late-stage trials, a benefit can be attained when prior data are used to optimize hypothesis testing, but this benefit can be reduced when the early- and late-stage survival distributions are discordant. 

\begin{figure}[!h]
\centering
\caption{Monte Carlo rejection probability estimates obtained for the 3 scenarios of Section \ref{sec:moresims}, using selected tests that account for a binary predictive marker. All estimates are based on $10^4$ simulated trials. Scenario 1, homogeneous survival distributions between phase II and phase III. Scenario 2, proportional stratum-specific hazards in phase III but not in phase II. Scenario 3, log-normal stratum-specific survival distributions. Legend: permutation, maximum-BEP test of Section \ref{sec:markers} based on the piecewise exponential model (highlighted in red); stratum-specific log-rank, Bonferroni correction of two stratum-specific tests based on log-normal accelerated failure time models \citep{Freidlin2014}; stratum-specific AFT, Bonferroni correction of two stratum-specific tests based on log-normal accelerated failure time models \citep{Kalbfleisch2002}; stratified Cox, test based on a stratified Cox proportional hazards model \citep{Mehrotra2012}. }
\includegraphics[scale=0.5,trim={0cm 0cm 0cm 0cm}]{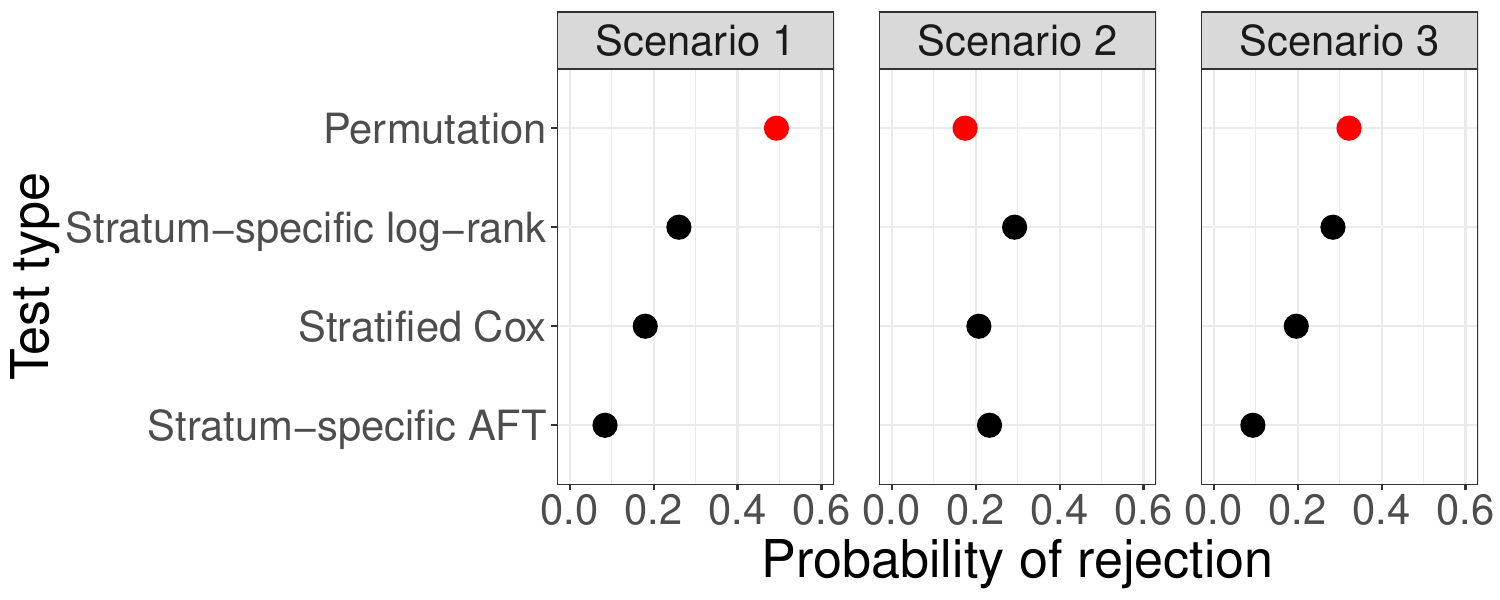}
\label{fig:2}
\end{figure}

\section{Discussion}

Data from previous studies should be routinely used to design late-stage clinical trials. This is especially relevant when standard  assumptions, such as the proportional hazards assumption, might not hold. \R{R1.2.c.disc}Our approach allows to specify a test for final analyses that achieves two goals: i) it accounts for the specific type of deviations from proportional hazards suggested by prior data - delayed treatment effects,  crossing survival curves, or others \citep{Uno2014}; ii) it satisfies the requirements of regulatory agencies \citep{Ventz2015}. The test maximizes a decision-theoretic criteria, it leverages prior data, and it is of $\alpha$-level for an interpretable null hypothesis.

\R{ext_data}Our contribution is distinct from the growing body of literature on trial designs that incorporate external data in the study analyses \citep{Psioda2018, Ventz2019}. In contrast, we leverage early-phase data only during the design stage of the late-phase trial. We do not directly use any external data in the final analysis of the late-phase trial. As a consequence, our testing procedure controls the phase III false-positive error rate at the chosen $\alpha$ level, even when early-phase data may overestimate the treatment effect. 


\R{R1_3} The use of appropriate metrics of treatment effects is  important to interpret trial results. Rejecting the null hypothesis of equality of survival curves is often not sufficient to understand which treatment is clinically preferable, especially when survival curves cross. To do so, study arms must be compared using interpretable measures of the treatment effects  \citep{Saad2018}. \R{R1_M5}
For example, in the context of Section 7 it could be relevant to consider the one-sided alternative hypothesis that the 12-months restricted mean survival time in the nivolumab arm is greater than in the standard of care arm \citep{Royston2013}. 
As discussed in Section 3, our approach could be used to construct testing procedures that maximize the expected power to detect improvements in restricted mean survival time. 

\R{R1_sample_size}To plan a phase III trial using our approach, different designs can be compared in simulation studies consistent with early-phase data \citet{Liu2018}. For example, the phase III sample size can be chosen as follows. First, multiple hypothetical phase III trials, each based on a different sample size, are simulated on the basis of early-phase information (e.g. as done in Section \ref{sec:app}). Then, the probabilities of detecting treatment effects associated to each sample size are estimated using the simulations. The smallest sample size whose estimated power achieves adequate levels (e.g. $>80\%$) is chosen for the design of the future phase III trial. 

\R{R1_1_1}To implement our approach, we used a piece-wise exponential model with fixed cut-points chosen on the basis of prior data \citep{Benichou1990}. In general, however, it may be necessary to choose between models with a different number of cut-points - e.g. by comparing their fit to early-phase data using Akaike's information criterion or similar metrics \citep[Chapter 7]{Gelman2013}. \R{R1_1_2}Models that regularize and shrink the difference between $\theta$ components between adjacent time points could be advantageous, especially with small early-phase datasets. This is because the regularization could reduce the impact of the choice of cut-points (\citealp[Chapter 3]{Ibrahim2005}; \citealp{Murray2016}). Additionally, investigators can add a prior distribution on the position and the number of cut-points \citep{Demarqui2008}. Other parametric and or non-parametric models different than the piece-wise exponential could also be considered in our approach \citep{Ibrahim2005}. Regardless of the chosen model, to implement our test it is necessary to compute the marginal likelihood of late-stage data. This may be complicated for some non-conjugate models, but several methods have been developed to approximate it \citep{Pajor2017}.

Although we derived our test assuming a single early-stage dataset, the use of multiple prior data sources may provide better outcome predictions for late-stage trials. Our approach can incorporate multiple datasets using power priors \citep{Ibrahim2000} or hierarchical models \citep{Spiegelhalter2004}. 

Our simulations, based on data from the CheckMate 141 trial, confirm  that  weighted log-rank tests can outperform other tests in the presence of delayed treatment effects. However, these tests depend on a set of tuning parameters, such as the duration of the lag time for lagged log-rank tests or the $\rho$ and $\delta$ coefficients of the $G^{\rho,\delta}$ family, which may be difficult to tune. Instead, our approach directly translates early-stage data into a test procedure for the late-stage trial. Robustness analyses highlight how the performance of our approach depends on the consistency of outcome data and the similarity of enrolled populations between phase II and III trials. Ensuring the transportability of results to subsequent trials remains a major concern in the design of exploratory clinical trials \citep{Wang2006}. 

\section*{Acknowledgements}
We thank G. Fell and T. Chen (Dana-Faber Cancer Institute) for helping with data collection. We also thank Massimiliano Russo (Harvard Medical School) for useful suggestions and Sarah Craver for help in manuscript editing.

\section*{Data Availability Statement}
The data that supports the findings of this study are available in the supplementary material of this article.

\bibliography{bibliography}
\bibliographystyle{biom}

\appendix
\section*{Appendix: technical proofs}
\renewcommand{\thesection}{A} 

\subsection{Proof of Proposition \ref{thm:thm1}}\label{app:1}

We will use two technical lemmas. The first shows that for any probability law  $Q\not\in\mcal{P}_0$ (c.f. Section \ref{sec:assumptions}) it is possible to construct a permutation test which is $\alpha$-level for $H_0:P\in\mcal{P}_0$  and has maximum power against the simple alternative $H_1:P=Q$. We assume that the densities $p_\theta(x)$ are taken with respect to the same dominating measure $\mu$, and that this is invariant with respect to permutations of the treatment assignments.

\begin{lemma}\label{lem:lemma1}
Let $\varphi(x)$ be the $\alpha$-level permutation test of $H_0:P\in\mcal{P}_0$ based on $T(x)=q(x)$, where $q(x)$ is the density of $Q\not\in\mcal{P}_0$ with respect to $\mu$. If $\varphi'(x)$ is another $\alpha$-level test of $H_0$, then $E_Q[\varphi'(x)]\leq E_Q[\varphi(x)]$, i.e. $\varphi(x)$ has higher power under the alternative $H_1:P=Q$.
\end{lemma}
\begin{proof}
Let $\mcal{P}_\mu\subseteq\mcal{P}_0$ be the set of all distributions dominated by $\mu$ that are invariant with respect to permutations of treatment assignment (a non-empty set, since it includes $q'(t,d,a)=\sum_{\sigma}q(t,d,a_\sigma)/n!$). By Theorem 2 of \citet{Lehmann1949},  $E_Q[\varphi'(x)]\leq E_Q[\varphi(x)]$ for every test $\varphi'(x)$ such that $E_P[\varphi'(x)]\leq \alpha$ for all $P\in \mcal{P}_\mu$. Now, if $\varphi'(x)$ is an $\alpha$-level test of $H_0$, then $E_P[\varphi'(x)]\leq \alpha$ for all $P\in\mcal{P}_\mu$ and   therefore $E_Q[\varphi'(x)]\leq E_Q[\varphi(x)]$.
\end{proof}

The second lemma shows that to construct an $\alpha$-level maximum-BEP test of $H_0$ it is sufficient to construct an $\alpha$-level test of $H_0$ that has maximum power against a simple alternative $H_1:P=Q$ defined by a specific $Q\not\in\mcal{P}_0$. More precisely, here $Q$ is the the predictive distribution of $x$ conditional on $\theta\in\Theta_1$, which is defined by
\begin{equation}\label{eqn:preddist}
q(x) = \int_{\Theta_1} p_\theta(x)\frac{p(\theta|x_e)}{P(H_1|x_e)}d\theta,
\end{equation}

\begin{lemma}\label{lem:lemma2}
A test $\varphi(x)$ of $H_0$ maximizes the BEP (c.f. Equation \ref{eqn:exppow}) among all $\alpha$-level tests if and only if it maximizes the power $E_Q[\varphi(x)]$ among all $\alpha$-level tests.
\end{lemma}
\begin{proof}
By Fubini's theorem, the BEP of a test $\varphi(x)$ can be written as
$BEP_\varphi = E_Q[\varphi(x)]\cdot P(H_1|x_e)$. \R{test_norm}Since $P(H_1|x_e)$ is positive and does not depend on $\varphi(x)$,  $BEP_\varphi$ is maximal if and only if $E_Q[\varphi(x)]$ is also maximal. \qedhere
\end{proof}

We are finally ready to prove  Proposition \ref{thm:thm1}.

\begin{proof}[Proof of Proposition \ref{thm:thm1}]
By Lemma \ref{lem:lemma1}, the $\alpha$-level permutation test $\varphi'(x)$ based on $T'(x)=q(x)$ maximizes the power $E_Q[\varphi(x)]$ among all $\alpha$-level tests of $H_0$. By Lemma \ref{lem:lemma2}, $\varphi'(x)$ thus has maximum BEP among all $\alpha$-level tests of $H_0$. It now suffices to show that $\varphi'(x)=\varphi(x)$ for all $x$ such that $q(x)>0$, where $\varphi(x)$ is the $\alpha$-level permutation test based on $T(x)=m(x)$. To do so, note that, by Equation \ref{eqn:distr}, if $q(x)>0$, then $m(x)>0$ as well, and the ratio $q(x)/m(x)$ is invariant with respect to permutations of the treatment arm assignments. The thesis now follows since $q(t,d,a_\sigma)\propto m(t,d,a_\sigma)$ for all permutations $\sigma$. \qedhere

\end{proof}

\subsection{Proof of Proposition \ref{app:prop}}\label{app:2}

Denote with $\Pi(x)$ the set of all $\binom{n}{\sum_{i=1}^n a_i}$ distinct datasets obtainable from $x=(t,d,a)$ by permuting the elements of $a$. Let $Q$ be defined by the density $q(x)$ in Equation \ref{eqn:preddist}. By assumption, if $q(x)>0$ then $m(x_1)\neq m(x_2)$ for all $x_1,x_2\in \Pi(x)$, $x_1\neq x_2$. By Proposition \ref{lem:lemma2}, $0\leq BEP_\varphi-BEP_{\varphi'} = E_Q[\varphi(x)-\varphi'(x)] \leq Q(E)$, where $E$ is the set of all $x$ such that $m(x)=m^{(k_\alpha)}(x)$. From Section 5.9 of \citet{Lehmann2006},
\begin{equation}\label{eqn:qdist}
Q(E)=\int \frac{\sum_\sigma \Ind{m(t,d,a_\sigma)=m^{(k_\alpha)}(x)}q(t,d,a_\sigma)}{\sum_\sigma q(t,d,a_\sigma)} dQ(x),
\end{equation}
where both sums extend over all $n!$ permutations $\sigma$ of $(1,\ldots,n)$. If $q(x)>0$, then
\begin{align*}
&\frac{\sum_\sigma \Ind{m(t,d,a_\sigma)=m^{(k_\alpha)}(x)}q(t,d,a_\sigma)}{\sum_\sigma q(t,d,a_\sigma)} = \frac{\sum_\sigma \Ind{m(t,d,a_\sigma)=m^{(k_\alpha)}(x)}m(t,d,a_\sigma)}{\sum_\sigma m(t,d,a_\sigma)} \\
&\quad\quad \leq \frac{\sum_\sigma \Ind{m(t,d,a_\sigma)=m^{(k_\alpha)}(x)}m(t,d,a_\sigma)}{\sum_\sigma \Ind{m(t,d,a_\sigma)\geq m^{(k_\alpha)}(x)} m(t,d,a_\sigma)} 
\leq \frac{\sum_\sigma \Ind{m(t,d,a_\sigma)=m^{(k_\alpha)}(x)}m^{(k_\alpha)}(x)}{\sum_\sigma \Ind{m(t,d,a_\sigma)\geq m^{(k_\alpha)}(x)} m^{(k_\alpha)}(x)} \\
&\quad\quad = \frac{\sum_\sigma \Ind{m(t,d,a_\sigma)=m^{(k_\alpha)}(x)}}{\sum_\sigma \Ind{m(t,d,a_\sigma)\geq m^{(k_\alpha)}(x)}} = \frac{\#\{j: m^{(j)}(x)=m^{(k_\alpha)}(x)\}}{\#\{j: m^{(j)}(x)\geq m^{(k_\alpha)}(x)\}},
\end{align*}
where the first equality follow because the ratio $q(x)/m(x)$ is invariant with respect to permutations $\sigma$ of $a$. Now, by the definitions of $m^{(k_\alpha)}(x)$ and $k_\alpha$, the denumerator of the last fraction is greater or equal than $ \alpha n!$. Instead, the numerator is equal to $n!/\binom{n}{\sum_{i=1}^n a_i}$, as i) $m^{(k_\alpha)}(x)=m(x_\alpha)$ for some $x_\alpha\in\Pi(x)$, ii) for each $x'=(t,d,a')\in\Pi(x)$ there are exactly $n!/\binom{n}{\sum_{i=1}^n a_i}$ permutations $\sigma$ such that $x'=(t,d,a_\sigma)$, and iii) $m(x)$ assumes distinct values on distinct points of $\Pi(x)$, by assumption. Thus, by Equation \ref{eqn:qdist},  
$$Q(E) \leq E_Q\left[\frac{1}{\binom{n}{\sum_{i=1}^n a_i}\alpha}\right] 
= \sum_{s=0}^n \frac{1}{\binom{n}{s}\alpha}\binom{n}{s}r^s(1-r)^{n-s} 
= f(\alpha,r,n).$$

\newpage

\section*{Supporting Information}

\begin{figure}[!htb]
\centering
\caption{Description of the data-generating distributions considered in the four scenarios of Section 7.2. In all scenarios, phase II data was generated from the Kaplan-Meier curves of Figure 1a of the main paper (c.f. Section 7.1). Phase III data was instead generated from the pictured survival curves. Scenario 1: predictive distributions obtained from the piece-wise exponential model; plotted curves are averages over the 10,000 simulated phase II trials (c.f. Section 7). Scenarios 2 and 3: fixed piece-wise exponential survival curves; the survival probabilities can change across arms only after 2 or 8 months of follow-up, respectively; event rates are fixed at the corresponding estimates obtained from Checkmate 141 data (c.f. Section 2). Scenario 4: similar to scenarios 2 and 3, but survival curves separate immediately and hazards are proportional across study arms. The orange (blue) curves are the assumed survival distributions in the nivolumab (standard of care) arms.  \label{R2:KM_for_sims}}
\includegraphics[scale=0.6]{./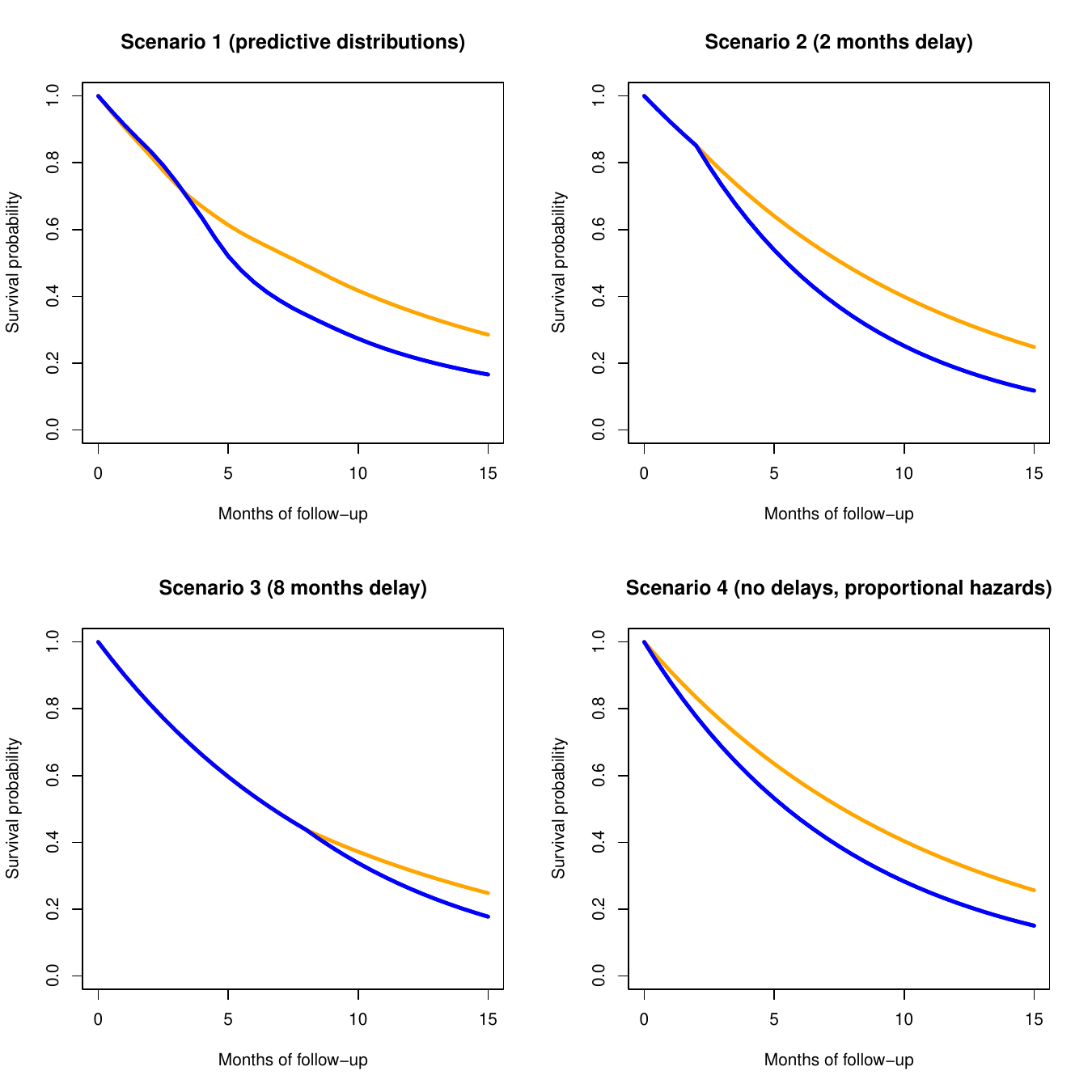}
\end{figure}

\newpage

\begin{figure}[!htbp]
\captionsetup[subfigure]{justification=centering}
\centering
\caption{Panel a, Phase III data-generating survival curves used in the additional simulations of Section 7.2. The orange (blue) curve is the assumed survival distribution in the nivolumab (standard of care) arm. Panel b, Monte Carlo estimates of the rejection probability of selected tests. The structure of the simulations was identical as in Section 7.2. Legend: permutation, maximum-BEP test based on the piecewise exponential model (highlighted in red); adaptive, adaptive log-rank test of \citet{Yang2010}; mantel, classical Mantel's log-rank test; $G^{0,1}$, Fleming-Harrington weighted log-rank test; lagged, lagged-log rank that ignores the first $10\%$ of observed follow-up times \citep{Zucker1990}, RMST, test of the difference in restricted mean survival times \citep{Huang2018}.}
\begin{subfigure}[t]{0.45\linewidth}
\subcaption{}
\includegraphics[scale=0.45,trim={0cm 0cm 0cm 1cm}]{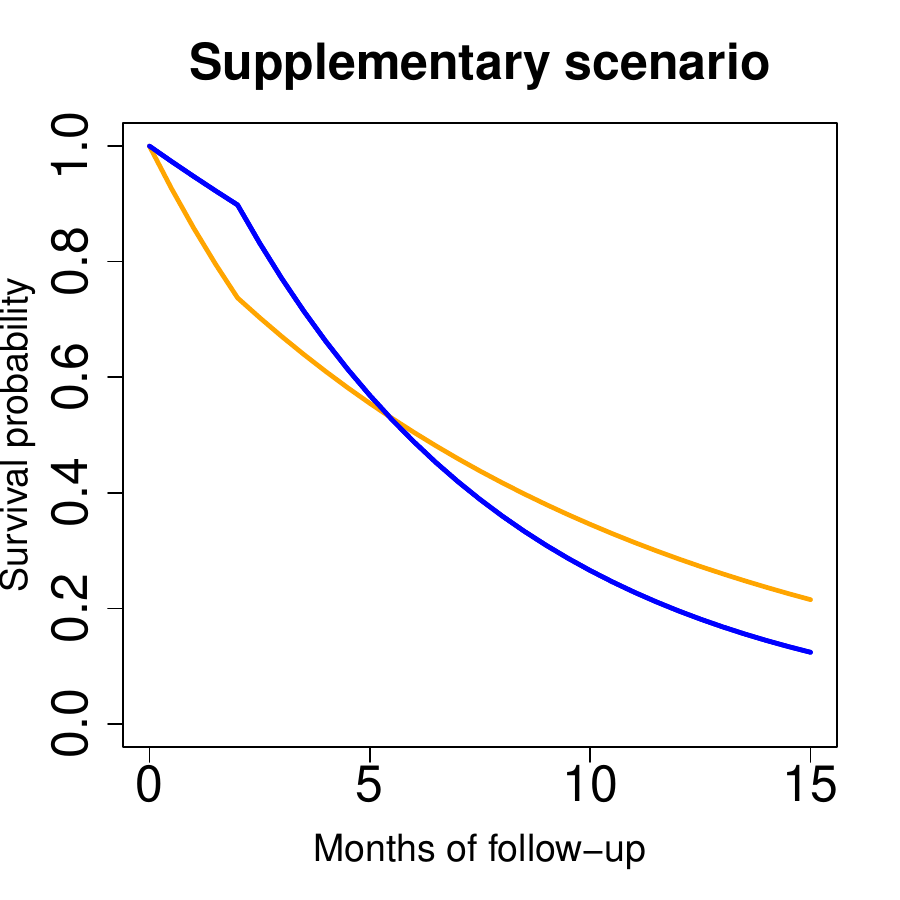}
\end{subfigure}
\begin{subfigure}[t]{0.4\linewidth}
\centering
\subcaption{}
\includegraphics[scale=0.425,trim={0cm 0cm 0cm 0cm}]{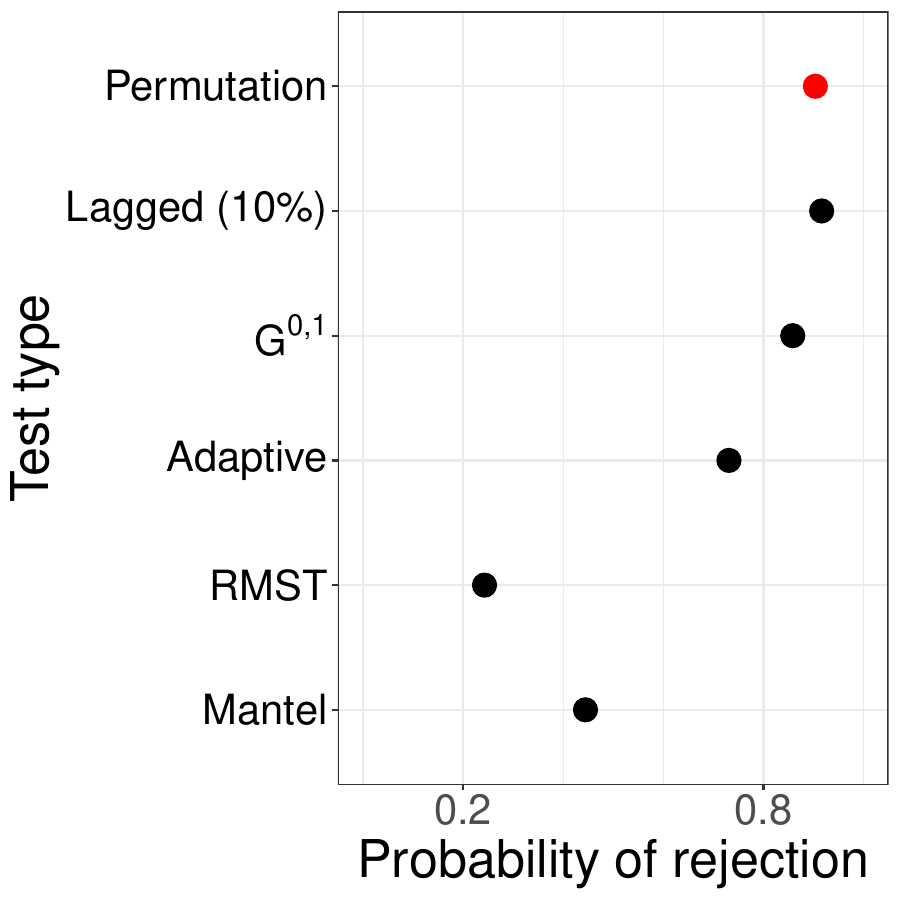}
\end{subfigure}
\label{fig:cross}
\end{figure}

\newpage

\begin{figure}[!htb]
\centering
\caption{Estimates (black dots) of the maximum-BEP permutation test's power obtained by varying early-phase samples sizes ($n_e=40,60,80,\ldots,180,200,220$) in the simulation study of Section 7.1. 
\label{R1:phase2size}}
\includegraphics[scale=0.9]{./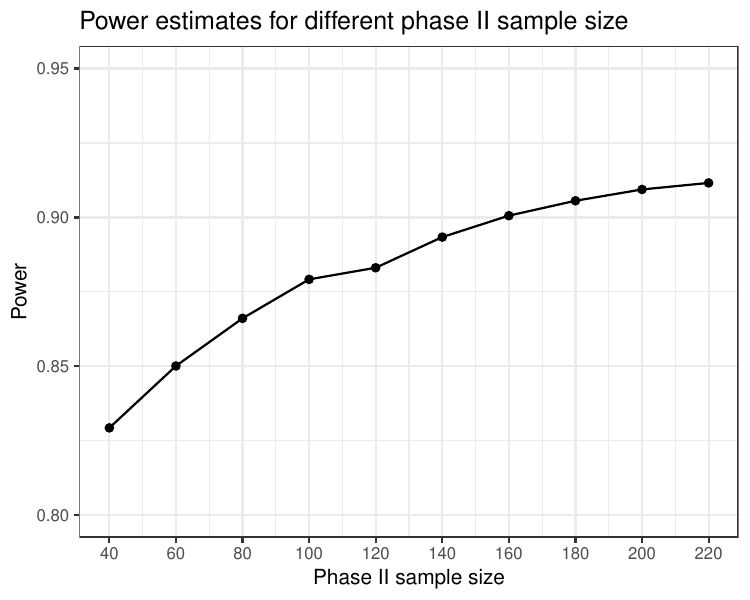}
\end{figure}

\newpage

\begin{figure}[htb]
\centering
\caption{Additional Monte Carlo rejection probability estimates for different tests that account for a binary predictive marker. Simulations were based on the same 3 scenarios of Section 8.1, but using a larger phase III sample size  ($n=1,000$). Results are qualitatively the same as to those reported in Figure 3. Legend: permutation, maximum-BEP test of Section 5 based on the piecewise exponential model (highlighted in red); stratum-specific log-rank, Bonferroni combination of two stratum-specific log-rank tests \citep{Freidlin2014}; stratum-specific AFT, Bonferroni combination or two stratum specific tests based on a log-normal accelerated failure time models \citep{Kalbfleisch2002}; stratified Cox, test based on a stratified Cox proportional hazards model \citep{Mehrotra2012}. \label{fig:suppl_marker}
}
\includegraphics[scale=0.6,trim={0cm 0cm 0cm 0cm}]{./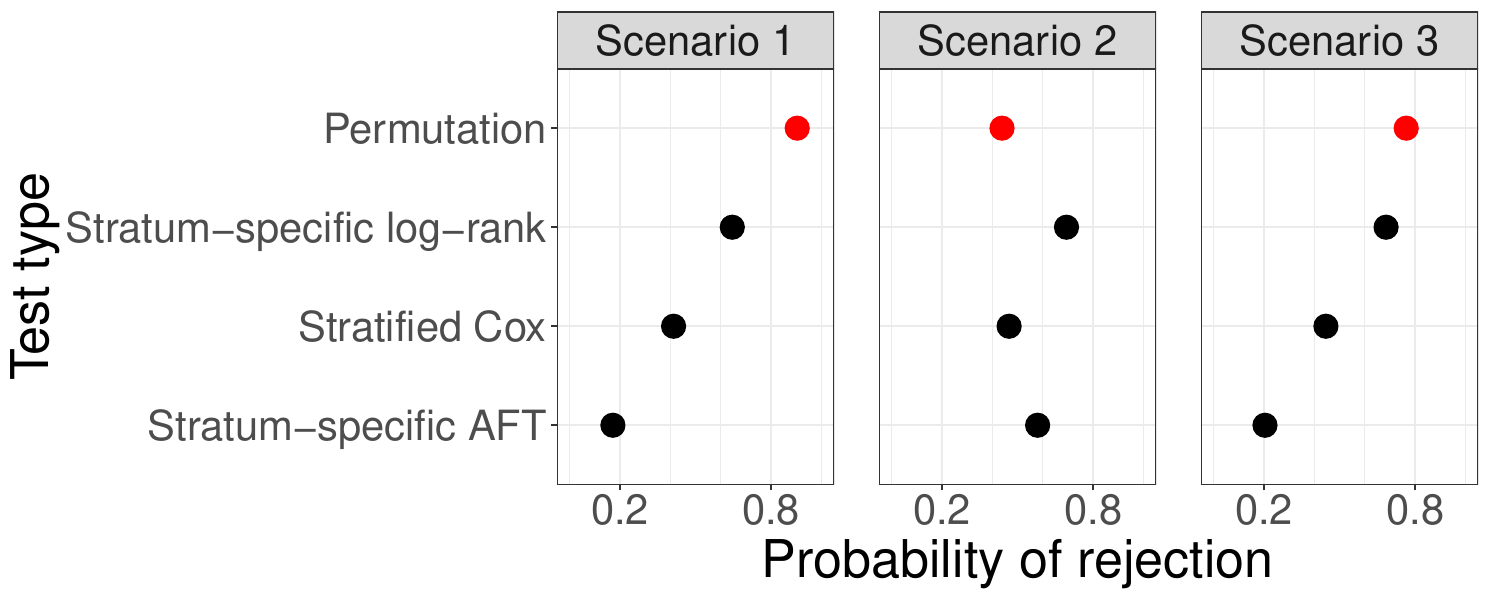}
\end{figure}

\end{document}